\newtheorem{theorem}{Theorem}
\newtheorem{proposition}{Proposition}
\newtheorem{corollary}{Corollary}
\newtheorem{lemma}{Lemma}
\theoremstyle{definition}
\theoremstyle{remark}
\newcommand{\R}{\mathbb{R}} 
\newcommand{\C}{\mathbb{C}} 
\newcommand{\Z}{\mathbb{Z}}
\newcommand{\cA}{{\ensuremath{\mathcal{A}}}}
\newcommand{\cM}{{\ensuremath{\mathcal{M}}}}
\newcommand{\cO}{{\ensuremath{\mathcal{O}}}}
\DeclareMathOperator{\M}{M}
\DeclareMathOperator{\GL}{GL}
\DeclareMathOperator{\SL}{SL}
\DeclareMathOperator{\tr}{tr}
\DeclareMathOperator{\RR}{R}
\thanks{The research of the first author was partially supported by Schweizerische Nationalfonds Grant 200021-178730. 
The research of the second author was partially supported by Schweizerische Nationalfonds Grant PP00P2\_157583}
\keywords{products of exponentials, Bass stable rank, rings of holomorphic functions}
\subjclass{15A54, 15A16, 30H50, 32A38, 32E10, 48E25}
\begin{document}
\title{Exponential factorizations of holomorphic maps}
\author{Frank Kutzschebauch and Luca Studer}
\email{frank.kutzschebauch@math.unibe.ch, \ luca.studer@math.unibe.ch}
\begin{abstract} 
We show that any element of the special linear group $\SL_2(\RR)$ is a product of two exponentials if the ring $\RR$ is either 
the ring of holomorphic functions on an open Riemann surface or the disc algebra. 
This is sharp: one exponential factor is not enough since the exponential map corresponding to $\SL_2(\C)$ is not surjective. 
Our result extends to the linear group $\GL_2(\RR)$.
\end{abstract} 

\maketitle

\section{introduction}
For a Stein space $X$, a complex Lie group $G$ and its exponential map $\exp: \mathfrak{g} \to G$ we say that a holomorphic map $f:X \to G$ 
is a product of $k$ exponentials if there are holomorphic maps $f_1, \ldots, f_k:X \to \mathfrak{g}$ such that $$f=\exp(f_1)\cdots \exp(f_k).$$
It is easy to see that any map $f$ which is a product of exponentials (for some sufficiently large $k$) is null-homotopic. 
In the case where $G$ is the special linear group $\SL_n(\C)$ the converse follows from~\cite{IK} as explained in~\cite{DK}. However, it turns out to be a difficult problem to determine the minimal number 
$k$ of needed factors in dependence of the dimensions of $X$ and $\SL_n(\C)$. We solve this problem 
for $\dim X=1$ and $n=2$.

\begin{theorem}
\label{Riemann}
Any holomorphic map from an open Riemann surface to the special linear group $\SL_2(\C)$ is a product of two exponentials.
\end{theorem}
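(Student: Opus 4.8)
The plan is to correct $f$ by a single \emph{unipotent} factor so that what remains is a single exponential, using the one-dimensionality of $X$ to defeat both the non-surjectivity of $\exp$ and the monodromy of the logarithm. First I would record the two elementary facts about $\exp\colon\mathfrak{sl}_2(\C)\to\SL_2(\C)$ that drive everything: a matrix fails to be a single exponential exactly when it is a non-trivial negative unipotent, i.e. $\tr M=-2$ and $M\neq -I$; and every unipotent $u$ is a single exponential with a \emph{polynomial} logarithm, $\log u=u-I$, since $u-I$ is nilpotent. This suggests reducing the theorem to the following: find a holomorphic nilpotent $N\colon X\to\mathfrak{sl}_2(\C)$ such that $h:=f\exp(-N)$ admits a global holomorphic logarithm. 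Indeed, one then has $f=\exp(\log h)\,\exp(N)$, a product of two exponentials whose second factor $\exp(N)$ is unipotent. Thus the whole problem becomes: repair $f$ by one unipotent so that the remainder is a single exponential.

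Next I would set up the mechanism for the logarithm of $h$. The safe target is to make $h$ everywhere diagonalizable with an eigenvalue function $\mu\in\cO^*(X)$ for which $\log\mu$ exists; concretely, if the lower-left entry of $h$ is nowhere zero one obtains an $LU$-type factorization from which a logarithm can be read off once a branch of $\log$ of the diagonal entry is available. The obstruction to such a branch is the class of $\mu$ in $H^1(X,\Z)$ (equivalently the class of $\tfrac{1}{2\pi i}\,d\log\mu$), and one must simultaneously keep $h$ off the non-exponential locus $\{\tr=-2,\ \neq -I\}$ and, ideally, off the eigenvalue-collision locus $\{\tr=\pm2\}$. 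The freedom available is the choice of $N$ in the nilpotent cone of $\mathfrak{sl}_2(\C)$, a two-dimensional family, giving two functional parameters with which to steer the eigenvalues of $h$ and to kill the cohomology class of $\mu$.

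The analytic engine is the Oka--Grauert/Gromov principle. Since $X$ is Stein of dimension one and $\SL_2(\C)$ deformation retracts onto $\mathrm{SU}(2)\simeq S^3$, which is $2$-connected, every map $f\colon X\to\SL_2(\C)$ is null-homotopic; hence all the continuous choices required above exist for purely topological reasons, and the Oka principle upgrades them to holomorphic ones. The main obstacle is precisely that $\exp$ is neither surjective nor everywhere a submersion: its critical values lie over $\{\tr=\pm2\}$ and it misses the negative-unipotent class. The heart of the argument is therefore to show that a \emph{single} unipotent correction $\exp(N)$ can be chosen so that $h=f\exp(-N)$ at once avoids the non-exponential class and carries a single-valued logarithm---that is, that one factor suffices both to cross or avoid the critical locus and to annihilate the $H^1(X,\Z)$ monodromy obstruction. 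This is where dimension one is essential: zero sets on a Riemann surface are movable and the ring $\cO(X)$ has small Bass stable rank, so a single correcting factor is enough, which is exactly why the sharp number of exponentials is two rather than three.
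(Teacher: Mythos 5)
Your reduction is sound as far as it goes: the characterization of the image of $\exp\colon\mathfrak{sl}_2(\C)\to\SL_2(\C)$ (only the non-trivial class with $\tr=-2$ is missed) is correct, and writing $f=\exp(\log h)\exp(N)$ with $N$ nilpotent would indeed prove the theorem. But the proposal stops exactly where the theorem begins. The existence of a holomorphic nilpotent $N$ with $h=f\exp(-N)$ a single exponential is \emph{asserted} ("the heart of the argument is therefore to show\dots") and never proved, and the appeal to the Oka principle does not close this gap as stated. The Oka/Gromov machinery produces holomorphic sections of suitable fibrations over $X$ from continuous ones; your constraints on $N$, however, include the \emph{global} condition that the eigenvalue function $\mu$ of $h$ have vanishing class in $H^1(X,\Z)$ (and, before that, that the discriminant $\tr(h)^2-4$ admit a square root, an obstruction in $H^1(X,\Z/2)$ you do not mention). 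A cohomological condition on $h$ is not a pointwise condition on $(x,N(x))$, so there is no bundle over $X$ whose sections are your admissible $N$'s, and Forstneri\v{c}-type theorems have nothing to apply to. You also verify neither local solvability near the degenerate locus (points where $f$ is $\pm$\,unipotent, or where avoiding $\{\tr=\pm2\}$ is impossible, e.g.\ $f\exp(-N)$ has trace $\equiv-2$ whenever $f\equiv -\,$(unipotent)) nor any ellipticity of a candidate fibration; and the closing invocation of "movable zero sets" and the Bass stable rank of $\cO(X)$ is not an argument --- in the paper the stable-rank trick is the tool for the \emph{disc algebra} case, not for open Riemann surfaces.

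It is instructive to compare with how the paper defuses precisely these obstructions. Instead of exponential $\times$ unipotent, it factors $A=BC$ with \emph{both} factors of trace zero (Proposition~\ref{vanishing trace}). The condition $\tr(B)=\tr(A(x)B)=0$, $\det B=1$ is pointwise, so it defines an explicit conic bundle $Z\subset X\times\C^3$; Lemma~\ref{local trivialization} and Lemma~\ref{D} show $Z$ is a $\C^\ast$-bundle off the discriminant divisor $\{D=0\}$, Lemma~\ref{local} provides local sections everywhere (including on $\{D=0\}$), and only then does Forstneri\v{c}'s theorem legitimately yield a global section. The trace-zero normalization then kills both of your residual obstructions at once: a trace-zero element of $\SL_2$ has \emph{constant} eigenvalues $\pm i$, so there is no monodromy of eigenvalues and no logarithm of a varying $\mu$ to choose --- one only needs the eigenline bundles $E(\pm i)$ to be trivial, which holds for every line bundle on an open Riemann surface (Lemma~\ref{log vanishing}). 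In short, your plan identifies the right difficulties but leaves them unresolved; the paper's trace-zero decomposition is the missing idea that converts the non-surjectivity of $\exp$ and the $H^1$ monodromy into the triviality of line bundles, and recasts the choice of the correcting factor as a genuine section-of-a-fibration problem to which the Oka principle applies.
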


Theorem~\ref{Riemann} improves a result of Doubtsov and Kutzschebauch, who showed the same result with three instead of two factors in the conclusion, see Proposition 3 in~\cite{DK}. Stated differently, Theorem~\ref{Riemann} says that every element of $\SL_2(\cO(X))$ can be written as a product of two exponentials, where $\cO(X)$ denotes the 
ring of holomorphic functions on a given open Riemann surface $X$. Our second result is of similar flavor, but the ring $\cO(X)$ is replaced by the disc algebra $\cA$. By definition, the disc algebra $\cA$ is the $\C$-Banach algebra of continuous functions on the closed 
disc $\{z \in \C: |z|\leq 1\}$ which are holomorphic on the interior, equipped with the supremum norm. 

\begin{theorem}
\label{disc}
For the disc algebra $\cA$, any element of $\SL_2(\cA)$ is a product of two exponentials.
\end{theorem}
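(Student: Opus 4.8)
The plan is to reduce the problem to producing a single, well-chosen exponential factor, exploiting the clean mechanism available for trace-zero matrices. If $N\in\SL_2(\cA)$ satisfies $\tr N\equiv 0$, then its characteristic polynomial is $\lambda^2+1$, so by Cayley--Hamilton $N^2=-I$, and therefore
\[
\exp\!\Big(\tfrac{\pi}{2}N\Big)=\cos\tfrac{\pi}{2}\,I+\sin\tfrac{\pi}{2}\,N=N,
\]
with $\tfrac{\pi}{2}N\in\mathfrak{sl}_2(\cA)$. Thus every trace-zero element of $\SL_2(\cA)$ is itself an exponential, and the logarithm automatically stays in the disc algebra. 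Consequently it suffices to find one exponential factor $\exp(A)$ for which $\exp(-A)M$ has vanishing trace: then $M=\exp(A)\exp\!\big(\tfrac{\pi}{2}\exp(-A)M\big)$ is a product of two exponentials. I would first search for $\exp(A)$ among the elementary (unipotent) matrices $E_{12}(f)=\exp\!\big(\begin{smallmatrix}0&f\\0&0\end{smallmatrix}\big)$ and $E_{21}(g)=\exp\!\big(\begin{smallmatrix}0&0\\g&0\end{smallmatrix}\big)$, since left multiplication changes the trace linearly: $\tr\!\big(E_{12}(f)M\big)=\tr M+f\,M_{21}$ and $\tr\!\big(E_{21}(g)M\big)=\tr M+g\,M_{12}$. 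Killing the trace then amounts to solving a single divisibility equation such as $f\,M_{21}=-\tr M$ in $\cA$.

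The role of the shared hypothesis — that both $\cO(X)$ and the disc algebra $\cA$ have Bass stable rank one — is to make such equations solvable after harmless normalizations. Writing $M=\big(\begin{smallmatrix}a&b\\c&d\end{smallmatrix}\big)$, the rows and columns of $M$ are unimodular because $ad-bc=1$; stable rank one then lets me add a multiple of one entry to another so as to turn an off-diagonal entry into a unit of $\cA$, that is, a nowhere-vanishing function on $\overline{\D}$. Once, say, $M_{21}$ is a unit, the equation $f\,M_{21}=-\tr M$ is solved by $f=-\tr M/M_{21}\in\cA$, and the construction closes up. Because $\overline{\D}$ is simply connected, every unit of $\cA$ also possesses a holomorphic logarithm in $\cA$, so the diagonal and triangular exponentials appearing in the reduction cause no further trouble; this is the analytic counterpart of the corresponding facts over $\cO(X)$.

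The main obstacle is to carry out the reduction within a budget of exactly two factors, which is precisely the improvement over the three-factor result of~\cite{DK}. Creating a unit by a stable-rank move and then killing the trace naively consume two separate elementary factors, leaving three in total after the final trace-zero exponential; the heart of the matter is to fuse these steps, for instance by allowing the first factor $\exp(A)$ to be a general, non-unipotent exponential and solving $\tr(\exp(-A)M)\equiv 0$ directly, where the three parameters of $A\in\mathfrak{sl}_2(\cA)$ give enough room to meet one scalar equation. The genuinely hard locus is where $M$ degenerates to $\pm I$: there the discriminant $(\tr M)^2-4$ and every off-diagonal entry of every conjugate of $M$ are forced to vanish while $\tr M=\pm2\neq0$, so no trace adjustment by a unipotent factor is possible and a separate local modification is required. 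Finally, in contrast to the Riemann surface case, all auxiliary functions — the solutions of the divisibility equations, the logarithms of units, and the quotients entering the triangular exponentials — must be shown to extend continuously to the boundary circle, i.e.\ to lie in $\cA$ and not merely in $\cO(\D)$; controlling this boundary regularity, rather than the interior algebra, is where I expect the real work of the disc-algebra case to concentrate.
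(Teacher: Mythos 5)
Your trace-zero mechanism is correct, and in fact the identity $\exp(\tfrac{\pi}{2}N)=\cos\tfrac{\pi}{2}\,I+\sin\tfrac{\pi}{2}\,N=N$ for $N\in\SL_2(\cA)$ with $\tr N=0$ (valid since $N^2=-I$ by Cayley--Hamilton) is a cleaner argument than the diagonalization in Lemma~\ref{log vanishing}. But the proof is incomplete exactly where you flag it: you never construct the first factor within the budget of two. Your unipotent scheme costs one elementary factor to make an off-diagonal entry a unit and a second to kill the trace, giving three exponentials in total --- which is the Mortini--Rupp bound the theorem is meant to beat --- and the proposed fix, solving $\tr(\exp(-A)M)\equiv 0$ for a general $A\in\mathfrak{sl}_2(\cA)$, is left entirely unexecuted. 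That equation is genuinely nonlinear; its Riemann-surface analogue (Proposition~\ref{vanishing trace}) costs an Oka principle plus the genericity hypothesis that $A(x)$ has distinct eigenvalues somewhere, and neither tool transfers to $\overline{\Delta}$ with boundary regularity. The $M=\pm I$ locus, which you correctly identify as fatal to unipotent trace-killing, is never handled. There is also a bookkeeping error that sends you hunting for a fusion the paper does not need: the stable-rank-one normalization should be a \emph{conjugation}, not a left multiplication. If $QMQ^{-1}=e^{B}e^{C}$ then $M=e^{Q^{-1}BQ}e^{Q^{-1}CQ}$, so making the $(1,1)$ entry $a$ a unit via $Q=\bigl(\begin{smallmatrix}1&h\\0&1\end{smallmatrix}\bigr)$ is free and consumes no factor.

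The paper's actual route avoids trace-zero factors altogether. After the free conjugation making $a\in\cA$ a unit, it writes $A=\mathrm{diag}(1/\delta,\delta)\cdot B$ with $B=\mathrm{diag}(\delta,1/\delta)A$, choosing the constant $\delta\geq 1$ to dominate the bounded function $(3+|d|)/|a|$; then $|\tr B|=|\delta a+d/\delta|\geq \delta|a|-|d|/\delta>2$ pointwise on $\overline{\Delta}$. Lemma~\ref{square root} then factors $\chi_B=(T-\lambda)(T-\lambda^{-1})$ with $\lambda\in\cA$ a unit satisfying $|\lambda|>1$ (the discriminant $\tr(B)^2-4$ is nonvanishing, hence has a square root in $\cA$), and an explicit eigenvector matrix $P$ with $|\det P|\geq 1$ gives $B=PDP^{-1}$ with $D=\mathrm{diag}(\lambda,\lambda^{-1})$, whence $B=\exp\bigl(P\,\mathrm{diag}(\log\lambda,-\log\lambda)\,P^{-1}\bigr)$; the diagonal prefactor is trivially an exponential. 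The scaling by $\delta$ is precisely the fusion you were looking for: a single factor simultaneously absorbs the normalization and pushes the remainder into the hyperbolic regime $|\tr|>2$, where a logarithm exists in $\cA$ for elementary reasons --- no Oka theory, no exceptional $\pm I$ locus --- and the boundedness of $d$ together with $|a|$ bounded below (your boundary-regularity worry) is exactly what makes such a $\delta$ exist.
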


Recall that the exponential map $\exp: \mathfrak{sl}_2(\C) \to \SL_2(\C)$ is not surjective. In this sense Theorem~\ref{Riemann} 
and~\ref{disc} are sharp. It is worth mentioning that $\SL_2(\C)$ is simply connected implying that holomorphic maps from open Riemann surfaces to $\SL_2(\C)$ and elements of $\SL_2(\cA)$ are null-homotopic. This is the reason that the map in question being null-homotopic is a redundant assumption in Theorem~\ref{Riemann} and~\ref{disc}. 
As corollaries of Theorem~\ref{Riemann} and~\ref{disc} we get the analogous results if the special linear group is replaced by the linear group with the corresponding entries.

\begin{corollary}
\label{Riemann2}
Any null-homotopic holomorphic map from an open Riemann surface to the linear group $\GL_2(\C)$ is a product of two exponentials. 
\end{corollary}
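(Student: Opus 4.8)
The plan is to reduce to the special linear case (Theorem~\ref{Riemann}) by splitting off the determinant, and then to absorb the resulting scalar factor into one of the two exponentials using the fact that scalar matrices are central.

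First I would set $d := \det f \in \cO(X)^*$. Since $f$ is null-homotopic and $\det: \GL_2(\C) \to \C^*$ is continuous, the composition $d: X \to \C^*$ is null-homotopic as well. On an open Riemann surface $X$, which is Stein and hence satisfies $H^1(X,\cO)=0$, the exponential sheaf sequence $0 \to \Z \to \cO \xrightarrow{\exp} \cO^* \to 0$ shows that the obstruction to the existence of a holomorphic logarithm of $d$ lies in $H^1(X,\Z)$ and coincides with the homotopy class of $d$ in $[X,\C^*]$. As this class vanishes, there is a holomorphic $g \in \cO(X)$ with $d = e^g$.

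Next I would put $c := e^{g/2} \in \cO(X)^*$, a nowhere-vanishing holomorphic square root of $d$, and consider $s := c^{-1} f$. Then $\det s = c^{-2} d = 1$, so $s$ is a holomorphic map $X \to \SL_2(\C)$. By Theorem~\ref{Riemann} we may write $s = \exp(f_1)\exp(f_2)$ with $f_1, f_2 \in \mathfrak{sl}_2(\cO(X))$; note that no homotopy hypothesis is needed at this stage, since $\SL_2(\C)$ is simply connected.

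Finally I would recombine, writing $f = c \cdot s = \exp\!\big(\tfrac{g}{2} I\big)\exp(f_1)\exp(f_2)$, where $\tfrac{g}{2}I$ denotes the scalar map $x \mapsto \tfrac{g(x)}{2} I$. Because $\tfrac{g}{2}I$ is central in $\mathfrak{gl}_2$, it commutes pointwise with $f_1$, so $\exp\!\big(\tfrac{g}{2}I\big)\exp(f_1) = \exp\!\big(\tfrac{g}{2}I + f_1\big)$. Hence $f = \exp\!\big(\tfrac{g}{2}I + f_1\big)\exp(f_2)$ is a product of two exponentials, as desired. The only genuine obstacle is the logarithm step, and this is precisely where the null-homotopy hypothesis is used: it is what distinguishes the $\GL_2$ statement (where $\det f$ may wind nontrivially) from the automatically null-homotopic $\SL_2$ statement.
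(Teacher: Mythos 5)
Your proposal is correct and follows essentially the same route as the paper's own proof: take a holomorphic logarithm of $\det f$ (justified by null-homotopy), divide by the central square root $e^{g/2}$ to land in $\SL_2(\C)$, apply Theorem~\ref{Riemann}, and merge the scalar exponential into the first factor using centrality, exactly as in the identity $e^De^Be^C=e^{D+B}e^C$ used there. Your added justification of the logarithm step via the exponential sheaf sequence is a fine (slightly more detailed) version of the paper's appeal to null-homotopy.
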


\begin{proof}
Let $X$ be an open Riemann surface and $\M_2(\C)$ the complex $2 \times 2$-matrices. 
If a given holomorphic map $A:X \to  \GL_2(\C)$ is null-homotopic, then $\det A: X \to \C^\ast$ is null-homotopic as well. Therefore 
$\det A$ has a holomorphic logarithm $\log:X \to \C$, satisfying $e^{\log}=\det A$. In particular, if $D:X \to \M_2(\C)$ is 
the diagonal matrix with diagonal entries $\log/2$, $\exp(-D)A$ has values in $\SL_2(\C)$. 
By Theorem~\ref{Riemann} there are holomorphic $B, C: X \to \M_2(\C)$ such that 
$$A=e^De^{-D}A=e^De^Be^C=e^{D+B}e^C,$$ where we used in the last equality that $D$ commutes with all other matrices. This finishes the proof.
\end{proof}
Unlike in Theorem~\ref{Riemann}, in Corollary~\ref{Riemann2} the assumption that $f$ is null-homotopic is not redundant.
For instance, 
\begin{align*} A(z)=
\begin{pmatrix}
z & 0 \\
0 & z
\end{pmatrix}, \ z \in \C^\ast
\end{align*}
is not null-homotopic since otherwise $\det A: \C^\ast \to \C^\ast, z \mapsto z^2$ would be null-homotopic as well.

\begin{corollary}
\label{disc}
For the disc algebra $\cA$, any element of $\GL_2(\cA)$ is a product of two exponentials.
\end{corollary}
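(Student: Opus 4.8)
The plan is to follow the argument used for Corollary~\ref{Riemann2} almost verbatim, with the role of the open Riemann surface played by the closed disc. The only genuinely new ingredient is to produce a logarithm of the determinant inside the disc algebra, and here the contractibility of the closed disc makes matters easier than in the Riemann surface case, which is precisely why no null-homotopy hypothesis is needed.

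I would start from an arbitrary $A \in \GL_2(\cA)$. Its determinant $\det A$ is an element of $\cA$ that is invertible in $\cA$, hence nowhere vanishing on the closed disc $\{z \in \C : |z| \leq 1\}$. Since the closed disc is simply connected, the nowhere-vanishing continuous function $\det A$ has vanishing winding number about the origin and therefore admits a continuous logarithm $\ell$ on the closed disc. On the open disc $\ell$ coincides locally with a holomorphic branch of $\log \det A$, so $\ell$ is holomorphic on the interior, continuous up to the boundary, and thus $\ell \in \cA$ with $e^\ell = \det A$. This is the disc-algebra analogue of the holomorphic logarithm step in Corollary~\ref{Riemann2}; there it required the null-homotopy assumption, whereas here it is automatic because the closed disc is contractible.

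With $\ell$ in hand, let $D \in \M_2(\cA)$ be the diagonal matrix with both diagonal entries equal to $\ell/2$, a scalar matrix. Then $\det\bigl(\exp(-D)A\bigr) = e^{-\ell}\det A = 1$, so $\exp(-D)A \in \SL_2(\cA)$. By Theorem~\ref{disc} (the factorization result for $\SL_2(\cA)$) there exist $B, C \in \M_2(\cA)$ with $\exp(-D)A = e^B e^C$. Since $D$ is scalar and hence commutes with every matrix, multiplying on the left by $e^D$ gives
$$A = e^D e^{-D} A = e^D e^B e^C = e^{D+B} e^C,$$
which exhibits $A$ as a product of the two exponentials $e^{D+B}$ and $e^C$ with $D+B, C \in \M_2(\cA)$. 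I do not expect a real obstacle here: once Theorem~\ref{disc} is invoked the argument is formal, and the single point deserving care is the existence of $\ell \in \cA$ with $e^\ell = \det A$, which the simple connectivity of the closed disc supplies directly.
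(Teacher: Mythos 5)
Your proposal is correct and follows essentially the same route as the paper: reduce to $\SL_2(\cA)$ by dividing out a square root of the determinant, where the logarithm $\ell \in \cA$ of the unit $\det A$ exists because the closed disc is contractible, and then absorb the scalar matrix $e^D$ into the first exponential factor since $D$ commutes with everything. Your explicit verification that $\ell$ lies in $\cA$ (continuous logarithm via simple connectivity, holomorphic on the interior) fills in exactly the detail the paper leaves implicit.
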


\begin{proof}
This follows from Theorem~\ref{disc} in the same way as Corollary~\ref{Riemann2} follows from Theorem~\ref{Riemann}. Here, we need in addition that any unit in $\cA$ has a logarithm, which follows from the fact that the disc (and thereby the domain of the elements of $\cA$) is contractible. In particular, the map in question being null-homotopic is again a redundant assumption.
\end{proof}

Corollary~\ref{disc} improves a result of Mortini and Rupp, who showed the same with four instead of two factors in the conclusion, see Theorem 7.1 in~\cite{MR}. Also Corollary~\ref{Riemann2} and~\ref{disc} are sharp in the sense that 
one exponential factor is not enough. An example is the matrix
\begin{align*}
A(z)= 
\begin{pmatrix}
1 & 1\\
0 & e^{4\pi i z}
\end{pmatrix}, z \in \Delta.
\end{align*}
One can show that the second entry of any lift of $z \mapsto A(z)$, $|z|<1/2$ via the exponential map tends to infinity if $z \to 1/2$. For details see~\cite{MR}, Example~6.4. 

We would like to thank Sebastian Baader for helpful comments on a draft of this text.

\section{Proof of Theorem~\ref{Riemann}}
An important ingredient in the proof is an Oka principle due to Forstneri\v c, which follows essentially from Theorem 2.1 in~\cite{Forstneric}. The version, which we use in this text is the below stated Theorem~\ref{Oka principle 2}. It is used to show Proposition~\ref{vanishing trace}, which is the main ingredient in the proof of Theorem~\ref{Riemann}.
Throughout this section $X$ denotes an open Riemann surface. 

\begin{proposition}
\label{vanishing trace}
Let $A: X \to \SL_2(\C)$ be holomorphic and assume that $A(x)$ has distinct eigenvalues for some $x \in X$. Then 
$A=BC$ for suitable holomorphic $B,C: X \to \SL_2(\C)$, both of which have vanishing trace.
\end{proposition}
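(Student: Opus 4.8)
The plan is to reduce the factorization to a single equation and then solve it via the Oka principle. If $B\in\SL_2(\C)$ is traceless, its characteristic polynomial is $\lambda^2+1$, so by Cayley--Hamilton $B^2=-I$ and hence $B^{-1}=-B$. Consequently, for any $A\in\SL_2(\C)$ the matrix $C:=-BA=B^{-1}A$ automatically lies in $\SL_2(\C)$ and satisfies $BC=A$, while $\tr C=-\tr(BA)$. It therefore suffices to produce a single holomorphic map $B:X\to\SL_2(\C)$ with $\tr B=0$ and $\tr(BA)=0$: then $B$ and $C=-BA$ are both traceless and multiply to $A$.

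I would then recast the search for $B$ as a section problem. Writing $A=A_0+\tfrac{\tr A}{2}I$ with $A_0$ traceless, the condition $\tr(BA)=0$ becomes the linear condition $\tr(BA_0)=0$, because $\tr B=0$. Viewing $\mathfrak{sl}_2(\C)$ as $\C^3$ equipped with the trace form $\langle X,Y\rangle=\tr(XY)$, whose quadratic form is $\tr(B^2)=-2\det B$, the admissible $B$ over a point $x$ form the intersection of the smooth affine quadric $\{\det B=1\}$ with the hyperplane $A_0(x)^\perp$. A short computation shows that this fibre is a smooth conic isomorphic to $\C^\ast$ exactly when the trace form is nondegenerate on $A_0(x)^\perp$, i.e.\ when $\langle A_0(x),A_0(x)\rangle=\tr(A_0(x)^2)\neq 0$, which happens precisely when $A(x)$ has distinct eigenvalues. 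Where the eigenvalues coincide the fibre degenerates: to a pair of parallel lines when $A_0(x)$ is a nonzero nilpotent, and to the whole quadric when $A(x)=\pm I$ (i.e.\ $A_0(x)=0$). By hypothesis $A$ has distinct eigenvalues at some point, so $(\tr A)^2-4$ is not identically zero and the degenerate locus is discrete.

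Now set $Z=\{(x,B)\in X\times\mathfrak{sl}_2(\C):\det B=1,\ \tr(BA(x))=0\}$ with projection $\pi:Z\to X$, so that a section of $\pi$ is exactly the map $B$ sought. Off the discrete set where $A=\pm I$ one verifies that $d(\det)$ and $B\mapsto\tr(BA_0(x))$ are independent (using that $B$ cannot be proportional to a nilpotent $A_0(x)$), hence $\pi$ is a holomorphic submersion there, with fibres $\C^\ast$ or a disjoint union of two copies of $\C$; all of these are Oka. Thus $\pi$ is a stratified elliptic submersion over the open Riemann surface $X$, and Theorem~\ref{Oka principle 2} reduces the problem to producing a \emph{continuous} section of $\pi$.

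Finally, I would build a continuous section by obstruction theory, using that an open Riemann surface is homotopy equivalent to a bouquet of circles, so that $H^{\ge 2}(X;\Z)=0$. Choosing a CW structure on $X$ with all bad points among the vertices, the fibres are nonempty everywhere and connected over the dense locus of distinct eigenvalues, so the section extends over the $0$- and $1$-cells through the connected generic fibres; the remaining obstruction to extending over the $2$-cells lies in $H^2(X;\pi_1(\C^\ast))=H^2(X;\Z)=0$ and hence vanishes. A continuous section therefore exists, and Theorem~\ref{Oka principle 2} upgrades it to a holomorphic one, completing the proof. I expect the main obstacle to be the local analysis at the discrete set of repeated-eigenvalue points: confirming that $\pi$ is genuinely a stratified elliptic submersion there, so that Theorem~\ref{Oka principle 2} applies, and checking that the continuous section can indeed be extended across the degenerate fibres.
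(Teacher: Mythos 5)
Your overall skeleton is the same as the paper's: the reduction via Cayley--Hamilton ($B^2=-I$, so it suffices to find one traceless holomorphic $B$ with $\tr(BA)=0$), the reformulation as a section problem for the conic bundle $Z$, and the combination of an Oka principle with a topological construction of a continuous section over an open Riemann surface. Your coordinate-free description of the fibers via the trace form is in fact slicker than the paper's: it identifies the generic fiber $\C^\ast$ and the degenerate fibers in one stroke, and it avoids the conjugation trick the paper needs in Lemma~\ref{D} to handle the locus $c=0$.

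However, there is a genuine gap at the decisive step, and you half-admit it yourself in the last sentence. Theorem~\ref{Oka principle 2} does \emph{not} reduce the problem to producing a merely continuous section: its hypothesis is a continuous section that is \emph{already holomorphic in a neighborhood of} $X'$, the set over which ellipticity fails. So before any obstruction theory you must construct local holomorphic sections of $Z$ through every degenerate fiber and splice your continuous section into them. This is exactly the paper's Lemma~\ref{local}, proved by a case distinction on the vanishing orders of $a-d$, $b$, $c$ at the bad point, solving first the relaxed problem $u^2+vw\in\cO_0^\ast$ and then rescaling by a local square root; it is the technical heart of the whole argument and your proposal contains no substitute for it. A second, related problem is your choice $X'=\{A=\pm I\}$: this leaves the nonzero-nilpotent points (repeated eigenvalues, $A\neq\pm I$) inside the region where the theorem demands an elliptic submersion. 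There $\pi$ is a submersion but not locally trivial --- the fiber jumps from $\C^\ast$ to two disjoint copies of $\C$ --- and you exhibit no dominating fiber spray, so ``stratified elliptic submersion'' is asserted rather than verified (and the version of the theorem quoted in the paper does not cover a stratified structure on the good region anyway). The paper sidesteps both issues at once by taking $X'=\{D=0\}$, the full discrete degeneracy locus, so that over the complement $\pi$ is an honest $\C^\ast$-fiber bundle (Lemmas~\ref{local trivialization} and~\ref{D}), hence elliptic, and by feeding the local holomorphic sections from Lemma~\ref{local} into the gluing argument of Proposition~\ref{Oka principle}. Your topological step (extension over a $1$-dimensional retract, vanishing of the obstruction since $X$ is homotopy equivalent to a $1$-complex) matches the paper's and is fine, but without the local holomorphic sections near $\{D=0\}$ the Oka principle you invoke simply does not apply.
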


Note that the conclusion of Proposition~\ref{vanishing trace} is equivalent to finding a holomorphic $B:X \to \SL_2(\C)$ such that 
$B$ and $AB$ have vanishing trace, simply since taking the inverse of a $2\times 2$-matrix with trace zero has again trace zero. 
Expressed differently, Proposition~\ref{vanishing trace} is proved if we can show the existence of a global section of the bundle
\begin{align*}
Z\coloneqq \{(x,B) \in X \times \SL_2(\C): \tr(B)=\tr(A(x)B)=0\}
\end{align*}
over $X$. If $a,b,c,d$ denote the coefficients of $A$, and $u,w,v,-u$ denote the coefficients of $B$, we can express $Z$ more explicitly as 
\begin{align*}
\{(x,u,v,w) \in X \times \C^3: (a(x)-d(x))u+b(x)v+c(x)w=0, \ u^2+vw=-1\}.
\end{align*}

More concretely, Proposition~\ref{vanishing trace} is proved if we manage the prove the following reformulation.

\begin{proposition}
\label{section}
Let $A: X \to \SL_2(\C)$ be holomorphic and assume that $A(x)$ has distinct eigenvalues for some $x \in X$. Then 
the restriction $h$ of the projection $X \times \C^3 \to X$ to $Z$ has a holomorphic section.
\end{proposition}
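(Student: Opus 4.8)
The plan is to realize $h$ as an \emph{elliptic submersion} and then invoke the Oka principle (Theorem~\ref{Oka principle 2}), which reduces the existence of a holomorphic section to that of a continuous one. First I would rewrite the fibre intrinsically. Put $A_0:=A-\tfrac12\tr(A)I$ for the trace-free part and equip $\mathfrak{sl}_2(\C)\cong\C^3$ with the nondegenerate trace form $\langle X,Y\rangle:=\tr(XY)$. Since $\tr(AB)=\langle A_0,B\rangle$ for trace-free $B$, and $\det B=1$ reads $\langle B,B\rangle=-2$, the fibre becomes $F_x=\{B:\langle B,B\rangle=-2,\ \langle A_0(x),B\rangle=0\}$, the slice of the fixed affine quadric $\{\langle B,B\rangle=-2\}$ by the hyperplane $A_0(x)^\perp$. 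A short computation gives $\langle A_0,A_0\rangle=\tfrac12(\tr(A)^2-4)$, so the distinct–eigenvalue hypothesis says exactly that $A_0$ is non–isotropic at $x$, whence the holomorphic function $\langle A_0,A_0\rangle$ is not identically zero. Thus $F_x$ is a smooth conic $\cong\C^\ast$ where $A_0(x)$ is non–isotropic, a pair of disjoint lines where $A_0(x)$ is isotropic and nonzero, and the whole quadric on the discrete set $E:=\{A_0=0\}=\{A=\pm I\}$.

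The set $E$ is exactly where the total space $Z$ is singular, so I would remove it by rescaling. As $X$ is an open Riemann surface, Weierstrass' theorem yields a holomorphic $\varphi$ whose zero divisor matches the common vanishing of the entries of $A_0$, so that $\widetilde A_0:=A_0/\varphi$ is holomorphic and nowhere zero; moreover $\langle A_0,B\rangle=0\Leftrightarrow\langle\widetilde A_0,B\rangle=0$, so any section of the system for $\widetilde A_0$ also solves the one for $A_0$. Replacing $A_0$ by $\widetilde A_0$, the resulting $Z$ is smooth and $h$ is a holomorphic submersion: by a Jacobian computation the differentials of the two defining equations can become dependent, or $dh$ can drop rank, only if $\widetilde A_0(x)$ is parallel to the fibre point $B$, which $\langle\widetilde A_0,B\rangle=0$ together with $\langle B,B\rangle=-2$ forbids. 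In particular $T_BF_x=\{B,\widetilde A_0\}^\perp$ is one–dimensional at every point.

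With $h$ a submersion I would exhibit a dominating spray. Using the cross product $\times$ on $(\mathfrak{sl}_2(\C),\langle\cdot,\cdot\rangle)$ coming from $\mathrm{SO}_3(\C)=\mathrm{SO}(\langle\cdot,\cdot\rangle)$, the vertical vector field $B\mapsto\widetilde A_0(x)\times B$ (infinitesimal rotation about the axis $\widetilde A_0(x)$) preserves both $\langle B,B\rangle$ and $\langle\widetilde A_0,B\rangle$, hence is tangent to the fibres; at every $B\in F_x$ it equals $\widetilde A_0\times B\neq0$ and so spans the one–dimensional $T_BF_x$ (again because $B\not\parallel\widetilde A_0$). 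Its flow is therefore a dominating spray and $h$ is elliptic, so Theorem~\ref{Oka principle 2} reduces the problem to producing a \emph{continuous} section. For this I would use that an open Riemann surface is homotopy equivalent to a wedge of circles: over a $1$–dimensional spine one may perturb to avoid the discrete locus where $\widetilde A_0$ is isotropic, there the submersion restricts to a $\C^\ast$–bundle with connected fibre, all obstructions beyond $H^1$ vanish, and the resulting continuous section extends over $X$.

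I expect the reduction to an elliptic submersion to be the crux: recognising that $Z$ is singular \emph{precisely} over $\{A=\pm I\}$ and curing this by the Weierstrass rescaling, and then verifying that the single rotational spray keeps dominating the fibres uniformly as the conic degenerates from $\C^\ast$ to a pair of lines, is exactly where the hypotheses of Theorem~\ref{Oka principle 2} must be matched. By comparison, the construction of the continuous section is routine given the $1$–dimensional homotopy type of $X$.
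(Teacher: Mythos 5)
Your front end is correct and genuinely different from the paper's: the paper proves local triviality of $Z$ over $X\setminus\{D=0\}$ by explicit fiberwise coordinates (Lemmas~\ref{local trivialization} and~\ref{D}) and keeps the degenerate locus as the subvariety $X'$ in Theorem~\ref{Oka principle 2}, whereas you cure the degeneration at the source: your identity $\langle A_0,A_0\rangle=\tfrac12\big(\tr(A)^2-4\big)=\tfrac12 D$ is right, the Weierstrass rescaling $\widetilde A_0=A_0/\varphi$ is legitimate (one direction of your equivalence is false at zeros of $\varphi$, but the direction you actually use -- a section for $\widetilde A_0$ is a section for $A_0$ -- holds), and the conjugation flow $(x,B,t)\mapsto\big(x,e^{t\widetilde A_0(x)}Be^{-t\widetilde A_0(x)}\big)$ is a complete dominating spray: $[\widetilde A_0,B]$ vanishes only for $B$ parallel to $\widetilde A_0$, which $\langle B,B\rangle=-2$ forbids even at isotropic points. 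So $h$ is an elliptic submersion over \emph{all} of $X$ after rescaling, and Theorem~\ref{Oka principle 2} applies with $X'=\emptyset$, needing only a continuous section. That is a real simplification of the analytic half.

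The gap is precisely the step you call routine: the continuous section. Your spine $K$ is a deformation retract of $X$, but extending a section from $K$ over $X$ requires the homotopy lifting property for $h$, and $h$ is \emph{not} a fibration over the isotropic locus $S$: the fiber jumps from $\C^\ast$ (homotopy type $S^1$) to two disjoint lines (homotopy type of two points), so the Husemoller-type extension theorem -- which the paper invokes only over $X\setminus X'$, where $h$ is a genuine $\C^\ast$-bundle -- is unavailable. In obstruction-theoretic terms, connectedness of $\C^\ast$ only produces the section on the $1$-skeleton; extending over a $2$-cell whose interior contains a point of $S$ meets an obstruction in $\pi_1(\C^\ast)=\Z$ (the winding number of the boundary values around the puncture), which an arbitrary section over $K$ has no reason to kill. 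The repair is exactly the inside-out gluing of the paper's Proposition~\ref{Oka principle}: construct local holomorphic sections on discs around each point of $S$ -- these exist for your rescaled problem by the same elementary case distinction as in Lemma~\ref{local}, and in fact more easily, since $\widetilde A_0$ is nowhere zero -- take a spine of $X\setminus S$ containing the boundary circles of those discs, extend the boundary values over the spine using only $\pi_0(\C^\ast)=0$, extend to $X\setminus S$ by the bundle property, and glue. With that insertion your argument is complete; note also that your closing assessment inverts the actual difficulty, since the ellipticity is the soft half and the passage of the topological section across the degenerate fibers is where the paper spends its care.
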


For an open subset $U\subset X$, $Z|U$ denotes the restriction of the bundle $h: Z\to X$ to $h^{-1}(U)$.
We start the proof of Proposition~\ref{section} with the following simple

\begin{lemma}
\label{local}
For every $x \in X$ there is a neighborhood $U$ of $x$ and a holomorphic section $F:U \to Z|U$ of $Z|U$.
\end{lemma}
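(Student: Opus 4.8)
The plan is to produce the section through the holomorphic implicit function theorem, after locating a point of the fibre at which the two equations cutting out $Z$ meet transversally. Fix $x_0 \in X$ and a local holomorphic coordinate $z$ centered at $x_0$, and record the two defining functions of $Z$ in the explicit form given above, namely $g_1 := (a-d)u + bv + cw$ and $g_2 := u^2 + vw + 1$. Suppose we have found a point $(x_0,u_0,v_0,w_0) \in Z$ at which the fibre-direction Jacobian, i.e. the matrix with rows $(a(x_0)-d(x_0),\,b(x_0),\,c(x_0))$ and $(2u_0,\,w_0,\,v_0)$, has rank two. Then some $2\times 2$ minor in the variables $(u,v,w)$ is invertible, say $\partial(g_1,g_2)/\partial(v,w)$; the implicit function theorem then solves $v,w$ as holomorphic functions of $z$ and $u$ near the point, and freezing $u\equiv u_0$ yields a holomorphic section $F$ of $Z|U$ on a neighborhood $U$ of $x_0$. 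Thus the whole task reduces to exhibiting, for every $x_0$, one fibre point where this rank equals two.

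First I treat the generic case, where the coefficient vector $(a-d,b,c)$ does not vanish at $x_0$. Then the fibre over $x_0$ is the intersection of the plane $\{g_1=0\}$ through the origin with the smooth affine quadric $\{u^2+vw=-1\}$ in $\C^3$. Since the quadratic form $u^2+vw$ is nondegenerate on $\C^3$ and therefore admits no two-dimensional isotropic subspace, its restriction to the plane is a nonzero quadratic polynomial, so the intersection is a nonempty plane conic, in particular a curve. Its singular points are precisely the points where the two gradients $(a-d,b,c)$ and $(2u,w,v)$ are parallel, and these are finite in number; hence at some point of the curve the rank is two, and the reduction above applies.

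It remains to handle the points with $(a-d)(x_0)=b(x_0)=c(x_0)=0$, which because $\det A = 1$ are exactly the points where $A(x_0)=\pm I$. If $A\equiv \pm I$ on a neighborhood, then the constant matrix $\bigl(\begin{smallmatrix} 0 & 1 \\ -1 & 0\end{smallmatrix}\bigr)$ defines a section, since there $\tr(AB)=\pm\tr(B)=0$ automatically. Otherwise the holomorphic vector $(a-d,b,c)$ vanishes at $x_0$ to some finite order $k\geq 1$, and we may write $(a-d,b,c)=z^k(\tilde a,\tilde b,\tilde c)$ with $(\tilde a,\tilde b,\tilde c)$ holomorphic and nonvanishing at $x_0$, whence $g_1=z^k(\tilde a\,u+\tilde b\,v+\tilde c\,w)$. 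Consequently every section of $\{\tilde a\,u+\tilde b\,v+\tilde c\,w=0,\ u^2+vw=-1\}$ is also a section of $Z$, and since $(\tilde a,\tilde b,\tilde c)(x_0)\neq 0$ this puts us back in the generic case of the previous paragraph.

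The only genuine difficulty is the locus $\{A=\pm I\}$: there the fibre of $h$ jumps from a curve to the entire quadric surface, so one cannot apply the implicit function theorem to $g_1,g_2$ directly. The factorization in the last paragraph is exactly what removes the vanishing of the linear coefficients and restores the transversal, one-dimensional picture; apart from this, the argument is a routine use of the holomorphic implicit function theorem together with the elementary geometry of a plane section of a smooth quadric.
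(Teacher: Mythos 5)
Your proof is correct, but it takes a genuinely different route from the paper's. The paper argues by explicit formulas: it first relaxes the condition $u^2+vw=-1$ to requiring only that $u^2+vw$ be a unit germ at the point (any such solution is then corrected by replacing $(u,v,w)$ with $(\tfrac{iu}{r},\tfrac{iv}{r},\tfrac{iw}{r})$, where $r$ is a local square root of $u^2+vw$), and then produces solutions of the relaxed problem by a three-way case distinction on the vanishing orders $n(a-d)$, $n(b)$, $n(c)$: if $n(a-d)\geq n(b)$ take $(u,v,w)=\bigl(1,-\tfrac{a-d}{b},0\bigr)$, symmetrically with $c$ in place of $b$, and if $n(a-d)<\min(n(b),n(c))$ take $(u,v,w)=\bigl(-\tfrac{b+c}{a-d},1,1\bigr)$. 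You instead invoke the holomorphic implicit function theorem at a smooth point of the fibre, which lets you solve $u^2+vw=-1$ exactly and dispenses with the square-root rescaling altogether. Both proofs turn on the same degeneracy, the common vanishing of $(a-d,b,c)$ at points where $A(x_0)=\pm I$: the paper absorbs it implicitly by always dividing by the coefficient of smallest vanishing order, while you factor out $z^k$ explicitly and correctly observe that sections of the divided-out bundle are sections of $Z$. What the paper's route buys is complete explicitness and brevity (closed formulas, no transversality discussion); what yours buys is conceptual clarity and some generality, since the argument only uses that the fibre over a point with nonvanishing coefficient vector is a reduced plane conic, and would adapt to other nondegenerate quadrics. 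One point you should justify rather than assert is the finiteness of the singular locus of the fibre: the parallelism condition $(2u,w,v)=t\,(a-d,b,c)$ confines candidate singular points to a line, on which $u^2+vw+1$ restricts to a quadratic polynomial in $t$ with constant term $1$, hence not identically zero; equivalently, $u^2+vw+1$ restricted to the plane can never acquire a repeated linear factor, so the conic is reduced (in fact this computation shows the fibre is smooth, the singular set being empty), and with that line added your argument is complete.
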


\begin{proof}
After passing to a local chart we may assume that $X$ is the unit disc $\Delta \coloneqq \{z \in \C: |z|<1\}$ and $x=0$. Finding a local holomorphic 
section in a neighborhood of $0$ is equivalent to finding a neighborhood $0 \in U\subset \Delta$ and holomorphic maps $u,v,w:U \to \C$, which 
satisfy 
\begin{align}
\label{1}
(a-d)u+bv+cw=0, \ \ u^2+vw=-1.
\end{align}
Local holomorphic solutions to~(\refeq{1}) exist if and only if there are local holomorphic solutions to the less restrictive problem
\begin{align}
\label{2}
(a-d)u+bv+cw=0, \ \ u^2+vw \in \cO^\ast_0.
\end{align}
The reason is that if $u,v,w$ are local solutions in a neighborhood of the origin to~(\refeq{2}), we can rescale these solutions with a local holomorphic square root of $u^2+vw$, or more precisely, by defining new solutions by $\tfrac{iu}{r}, \tfrac{iv}{r}, \tfrac{iw}{r}$ for some $r:U \to \C^\ast$ satisfying $r^2=u^2+vw$ defined on a sufficiently small neighborhood $U$ of the origin. 
To find solutions to~(\refeq{2}) we distinguish three cases. Let $n(f) \in \Z_{\geq 0}$ denote the vanishing order of a holomorphic function $f:\Delta \to \C$ at the origin. 
The first case is $n(a-d)\geq n(b)$. Then $-\tfrac{a-d}{b}$ is holomorphic in a neighborhood of $0$ and $u=1$, $v=-\tfrac{a-d}{b}$ and $w=0$ is a solution to~(\refeq{2}). 
The second case $n(a-d)\geq n(c)$ we find similarly a solution $u=1$, $v=0$ and $w=-\tfrac{a-d}{c}$ to~(\refeq{2}). The remaining case is $n(a-d)<\min(n(b),n(c))$, 
which implies $n(a-d)<n(b+c)$ and hence $-\tfrac{b+c}{a-d}$ is holomorphic in a neighborhood of the origin and vanishes at the origin. Then $u=-\tfrac{b+c}{a-d}$, $v=1$, $w=1$ solves~(\refeq{2}). This finishes the proof.
\end{proof}

Let $D$ denote the discriminant of $A$, that is $D\coloneqq (a+d)^2-4$. By \textit{isomorphic} fiber bundles we mean isomorphic as complex analytic fiber bundles.

\begin{lemma}
\label{local trivialization}
Let $U\subset X \setminus (\{D=0\}\cup \{c=0\})$ be an open neighborhood where $D:U \to \C$ has a holomorphic square root $\sqrt D$, and 
set $f\coloneqq \tfrac{d-a + \sqrt{D}}{2c}.$ Then $Z|U$ is isomorphic to $U \times \C^\ast$, and an isomorphism is given by 
$$\phi:Z|U \to U \times \C^\ast, \ \phi(x,u,v,w)=(x, u+f(x)v).$$
\end{lemma}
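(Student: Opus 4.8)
The plan is to check directly that $\phi$ is an isomorphism of complex analytic fiber bundles over $U$ by producing an explicit holomorphic inverse. The whole argument rests on one algebraic observation: $f$ is a root of the quadratic $c\,t^2+(a-d)t-b$, and I would introduce its companion root $g\coloneqq \tfrac{d-a-\sqrt D}{2c}$. Using $\det A=1$ (so $ad-bc=1$) together with $D=(a+d)^2-4$, one computes $(d-a+\sqrt D)(d-a-\sqrt D)=(d-a)^2-D=-4bc$, from which $f+g=\tfrac{d-a}{c}$ and $fg=-\tfrac{b}{c}$. Since $U$ avoids $\{c=0\}$ these are holomorphic on $U$, and since $U$ avoids $\{D=0\}$ we have $f-g=\tfrac{\sqrt D}{c}\neq 0$ there.

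First I would use $c\neq 0$ to solve the linear defining equation of $Z$ for $w$, obtaining $w=(f+g)u+fg\,v$. Substituting this into the quadratic relation $u^2+vw=-1$ and invoking the two symmetric functions above, the left-hand side factors:
\[
u^2+vw=u^2+(f+g)uv+fg\,v^2=(u+fv)(u+gv).
\]
Thus on every fiber the pair of defining equations is equivalent to the single relation $(u+fv)(u+gv)=-1$.

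Next I would set $s\coloneqq u+fv$ and $t\coloneqq u+gv$, so that $st=-1$. In particular $s$ never vanishes, which shows that $\phi(x,u,v,w)=(x,s)$ indeed takes values in $U\times\C^\ast$. For the inverse, given $(x,s)\in U\times\C^\ast$ I would put $t\coloneqq -1/s$ and solve the linear system $u+fv=s$, $u+gv=t$; this is uniquely solvable because its determinant equals $g-f=-\tfrac{\sqrt D}{c}\neq 0$ on $U$, yielding $v=\tfrac{s-t}{f-g}$, then $u=s-fv$, and finally $w=(f+g)u+fg\,v$. All these expressions are holomorphic in $x\in U$ (as $f$, $g$ and $\tfrac{1}{f-g}$ are) and Laurent-polynomial in $s$, so the inverse is holomorphic. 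Since both $\phi$ and its inverse commute with the projection to $U$, this exhibits the required bundle isomorphism.

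The only genuine insight — and hence the main obstacle — is spotting the factorization $(u+fv)(u+gv)$, that is, recognizing that $f$ and its conjugate root $g$ simultaneously linearize the quadric on the plane cut out by the linear constraint. Once this is in hand everything reduces to routine linear algebra, and the two geometric hypotheses play transparent roles: $c\neq 0$ lets me eliminate $w$, while $D\neq 0$ guarantees $f\neq g$ so that the change of coordinates $(u,v)\mapsto(s,t)$ is invertible.
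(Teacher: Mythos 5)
Your proof is correct and follows essentially the same route as the paper: both eliminate $w$ via $c\neq 0$ and factor the resulting quadric as $(u+fv)(u+gv)$ using the square root of $D$, reducing the fiber equation to $\tilde u\tilde v=-1$; the paper completes the square and invokes the invertibility of the linear coordinate change $(u,v)\mapsto(\tilde u,\tilde v)$, while you reach the same factorization via the symmetric functions $f+g=\tfrac{d-a}{c}$, $fg=-\tfrac{b}{c}$ and write out the holomorphic inverse explicitly. This is only a presentational difference, not a different argument.
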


\begin{proof}
First we do the necessary computations at the level of a single fiber. 
For this, we think of the coefficients $a,b,c,d$ of $A$ as elements of $\C$. We want to determine all $u,v,w \in \C$ such that 
\begin{align*}
(a-d)u+bv+cw=0, \ \ -u^2-vw=1.
\end{align*}
Since $c\not = 0$, we can solve for $w$ and get equivalently 
\begin{align*}
-1		&=u^2+vw \\
		&=u^2+v\tfrac{(d-a)u-bv}{c} \\
		&=u^2+\tfrac{d-a}{c}uv-\tfrac{b}{c}v^2 \\
		&=\Big(u+\tfrac{d-a}{2c}v\Big)^2-\Big(\tfrac{(d-a)^2}{4c^2}+\tfrac{b}{c}\Big)v^2.
\end{align*}
Furthermore we have 
\begin{align*}
\frac{(d-a)^2}{4c^2}+\frac{b}{c}=\frac{(d+a)^2-4ad}{4c^2}+\frac{4bc}{4c^2}=\frac{(d+a)^2-4(ad-bc)}{4c^2}=\frac{D}{4c^2}.
\end{align*}
Fix a square root $\sqrt D$ of $D$ and note that 
$$\tilde u=u+\tfrac{d-a+\sqrt{D}}{2c}v, \ \ \tilde v=u+\tfrac{d-a-\sqrt{D}}{2c}v$$ defines a linear coordinate change of $\C^2$, which translates 
the above equation to
\begin{align*}
-1		&=\Big(u+\tfrac{d-a}{2c}v\Big)^2-\tfrac{D}{4c^2}v^2 \\
		&=\Big(u+\tfrac{d-a}{2c}v\Big)^2-\Big(\tfrac{\sqrt{D}}{2c}v\Big)^2 \\
		&=\Big( u+\tfrac{d-a+\sqrt{D}}{2c}v\Big ) \Big(u+\tfrac{d-a-\sqrt{D}}{2c}v \Big) \\
		&=\tilde u \tilde v.
\end{align*}
This shows that the fiber is given by $\{(\tilde u, \tilde v) \in \C^2: \tilde u \tilde v=-1\}=\C^\ast$ and that $(u,v,w) \to u+\tfrac{d-a+\sqrt{D}}{2c}v$ is an 
isomorphism of the fiber onto $\C^\ast$. Moreover, our computations yield a trivialization of $Z|U$, which is defined similarly, or more precisely, as in the 
assumption of the Lemma. This is the case since our computations work out just the same way if we have a holomorphic dependence on $x \in U$.
\end{proof}

\begin{lemma}
\label{D}
Over $X\setminus \{D=0\}$, $h:Z \to X$ is a fiber bundle with fiber $\C^\ast$.
\end{lemma}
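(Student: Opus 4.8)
The plan is to exhibit, around every point of $X\setminus\{D=0\}$, a local trivialization with fibre $\C^\ast$; since being a holomorphic fibre bundle with a prescribed fibre is a local condition on the base, this suffices. Lemma~\ref{local trivialization} already supplies such a trivialization on any simply connected $U\subset X\setminus(\{D=0\}\cup\{c=0\})$, because on a small disc on which $D$ is zero-free a holomorphic square root $\sqrt D$ exists. Hence the only points needing extra care are those $x_0$ with $D(x_0)\neq 0$ but $c(x_0)=0$, where the coordinate $u+fv$ of Lemma~\ref{local trivialization} is unavailable since $f=\tfrac{d-a+\sqrt D}{2c}$ blows up.

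To deal with them I would use a conjugation trick instead of redoing the computation of Lemma~\ref{local trivialization} in the symmetric cases. For a constant $g\in\GL_2(\C)$ the map $(x,B)\mapsto(x,gBg^{-1})$ is a biholomorphism of $X\times\SL_2(\C)$ over $X$ carrying the bundle $Z$ attached to $A$ onto the bundle attached to $gAg^{-1}$: indeed $\tr$ and $\det$ are conjugation invariant, and $\tr\big((gAg^{-1})(gBg^{-1})\big)=\tr(AB)$, so all three defining equations of $Z$ (namely $\tr(B)=0$, $\det(B)=1$ and $\tr(A(x)B)=0$, equivalently $(a-d)u+bv+cw=0$) are preserved. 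Consequently $Z$ for $A$ and $Z$ for $gAg^{-1}$ are isomorphic fibre bundles over $X$.

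It then remains to choose $g$ well near $x_0$. Since $D=(a+d)^2-4$ is the discriminant of the characteristic polynomial of $A(x_0)$, the condition $D(x_0)\neq 0$ means $A(x_0)$ has two distinct eigenvalues and is in particular not scalar. A non-scalar $2\times 2$ matrix can always be conjugated so that its lower-left entry becomes nonzero (diagonalize it and then conjugate by $\left(\begin{smallmatrix}1&0\\1&1\end{smallmatrix}\right)$, which turns the diagonal $\mathrm{diag}(\lambda_1,\lambda_2)$ into a matrix with lower-left entry $\lambda_1-\lambda_2\neq 0$). Choosing such a $g$, the conjugate $\tilde A\coloneqq gAg^{-1}$ has lower-left entry $\tilde c(x_0)\neq 0$ and the same discriminant $\tilde D=D$. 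On a small disc $U\ni x_0$ we then have $\tilde c\neq 0$ and $D$ zero-free, so Lemma~\ref{local trivialization} trivializes $Z|U$ built from $\tilde A$; composing with the conjugation isomorphism trivializes the original $Z|U$. Together with the case $c(x_0)\neq 0$ this produces local trivializations with fibre $\C^\ast$ around every point of $X\setminus\{D=0\}$, which is exactly the assertion.

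The main obstacle is precisely the locus $\{c=0\}$, and the conjugation reduction is what I expect to be the decisive idea: it lets me transport an arbitrary point into the range of Lemma~\ref{local trivialization} and thereby avoids a separate ad hoc analysis of how the quadric $u^2+vw=-1$ meets the varying plane $(a-d)u+bv+cw=0$ along its degenerate directions.
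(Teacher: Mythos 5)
Your proof is correct and takes essentially the same route as the paper: points with $c(x)\neq 0$ are handled by Lemma~\ref{local trivialization} on a disc where $\sqrt{D}$ exists, and points with $c(x)=0$ are reduced to that case by conjugating $A$ with a constant matrix (possible since $D(x)\neq 0$ forces $A(x)$ to be non-scalar), using that conjugation induces an isomorphism of the corresponding bundles $Z$ over $X$. The only cosmetic difference is that you conjugate by $g\in\GL_2(\C)$ with an explicit diagonalization argument, while the paper uses $P\in\SL_2(\C)$ and states the existence of $P$ without constructing it.
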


\begin{proof}
At points $x \in X\setminus \{D=0\}$ with $c(x)\not =0$, choose a neighborhood $U\subset X$ of $x$ such that $c|U$ does not 
vanish, and such that $D$ has a square root on $U$. Then a trivialization of $Z|U$ is given by Lemma~\ref{local trivialization}. 
In the case $c(x)=0$, let us reduce the problem to the case $c(x)\not =0$ with the following observation. Our bundle is given by 
\begin{align*}
Z=\{(x,B) \in X \times \SL_2(\C): \tr(B)=\tr(A(x)B)=0\}.
\end{align*}
Define for $P \in \SL_2(\C)$ a bundle 
\begin{align*}
Z_P		&=\{(x,PBP^{-1}) \in X \times \SL_2(\C): \tr(B)=\tr(A(x)B)=0\}.
\end{align*}
Clearly $Z$ and $Z_P$ are isomorphic over $X$. Since conjugation with a matrix does not change the trace, we obtain with the substitution $C=PBP^{-1}$ 
\begin{align*}
Z_P		&=\{(x, C) \in X \times \SL_2(\C): \tr(P^{-1}CP)=\tr(A(x)P^{-1}CP)=0\} \\
		&=\{(x, C) \in X \times \SL_2(\C): \tr(C)=\tr(PA(x)P^{-1}C)=0\}.
\end{align*}
Note that if the third entry $c$ of $A$ equals $0$ at $x$, then, since $D(x)\not =0$ and hence $A(x)\not = \pm id$, there is $P \in \SL_2(\C)$ such that 
the third entry of $PA(x)P^{-1}$ does not vanish. Using that $Z$ and $Z_P$ are isomorphic and that we can solve the problem for $Z_P$ close to 
$x$, the statement follows. 
\end{proof}

To finish the proof of Propostion~\ref{section} we need the following special case of Theorem 6.14.6, p.\,310 in~\cite{Francs book}.

\begin{theorem}
\label{Oka principle 2}
Let $h:Z \to X$ be a holomorphic map of a reduced complex space $Z$ onto a reduced Stein space $X$. Let $X'\subset X$ be a 
complex analytic subvariety and let $Z'\coloneqq h^{-1}(X')$ and assume that the restriction $h:Z\setminus Z' \to X \setminus X'$ is an elliptic submersion. 
Moreover, let $f:X \to Z$ be a continuous section of $h$ which is holomorphic in a neighborhood of $X'$. Then $f$ is homotopic through 
continuous sections of $h$ which are holomorphic in a fixed small neighborhood of $X'$ to a holomorphic section of $h$.
\end{theorem}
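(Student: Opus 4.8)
The plan is not to reprove any Oka theory but to obtain Theorem~\ref{Oka principle 2} as a clean specialization of the general Oka principle for sections of stratified elliptic submersions, namely Theorem 6.14.6 in~\cite{Francs book}. The first step is to match the data: our base $X$ is a reduced Stein space, the total space $Z$ is a reduced (possibly singular) complex space, and $h:Z\to X$ is holomorphic, which is exactly the ambient setting of the cited theorem. The pair $X'\subset X$ plays the role of the exceptional subvariety, with $Z'=h^{-1}(X')$, and the hypothesis that $h:Z\setminus Z'\to X\setminus X'$ is an elliptic submersion is precisely the ellipticity required over the open stratum. Since we want neither a parameter space nor approximation on a compact set, I would feed the general theorem the trivial one-point parameter and discard its approximation clause, keeping only its homotopy-with-interpolation content.

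The second step is to identify the hypothesis that the continuous section $f$ is holomorphic in a neighborhood of $X'$ with the interpolation (relative) condition of the cited theorem along the subvariety where ellipticity is not assumed. Invoking Theorem 6.14.6 with $X'$ as this subvariety then yields exactly the desired conclusion: a homotopy, through continuous sections that remain holomorphic in one fixed neighborhood of $X'$, from $f$ to a global holomorphic section of $h$. No ellipticity or even submersivity of $h$ over $X'$ is needed, because the section is never modified there; it is kept holomorphic throughout the deformation.

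The only point demanding genuine care --- and hence the main obstacle --- is the precise bookkeeping of the hypotheses of Theorem 6.14.6. Concretely, I would verify three things: that the cited notion of ellipticity over the top stratum is implied by (indeed coincides with) our assumption that $h:Z\setminus Z'\to X\setminus X'$ is an elliptic submersion; that the theorem genuinely allows $Z$ to be a reduced complex space with singularities along $Z'$, so that it still applies even though the fibers of $h$ degenerate over the discriminant locus $X'=\{D=0\}$ where the $\C^\ast$-bundle structure of Lemma~\ref{D} fails; and that the interpolation is retained by the entire homotopy, not merely by its endpoint. Once these matchings are checked, the statement follows with no further argument, and it is then ready to be combined with Lemmas~\ref{local}--\ref{D} to finish the proof of Proposition~\ref{section}.
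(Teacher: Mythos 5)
Your proposal matches the paper exactly: the paper gives no independent proof of this theorem but states it verbatim as ``the following special case of Theorem 6.14.6, p.\,310 in~\cite{Francs book}'', which is precisely your specialization of Forstneri\v c's general Oka principle (trivial parameter, no approximation, interpolation along $X'$). Your extra bookkeeping of how the hypotheses match is a sound and slightly more careful version of the same citation-based argument.
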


A consequence of this is the following 

\begin{proposition}
\label{Oka principle}
Let $h:Z \to X$ be a holomorphic map from a reduced complex space onto an open Riemann surface. Moreover, 
assume that there is a discrete set $X'\subset X$ such that for $Z'=h^{-1}(X')$, the restriction $h:Z \setminus Z' \to X \setminus X'$ is 
a fiber bundle with fiber $\C^\ast$ and assume that there is a local holomorphic section in a neighborhood of every point of $X'$. 
Then $h$ has a global holomorphic section $f:X \to Z$.
\end{proposition}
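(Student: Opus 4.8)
The plan is to reduce the statement to the Oka principle of Theorem~\ref{Oka principle 2}, applied to the given map $h:Z \to X$ together with the analytic subvariety $X' \subset X$ and $Z'=h^{-1}(X')$. Three hypotheses have to be checked: that $X$ is a reduced Stein space, that the restriction $h:Z\setminus Z' \to X \setminus X'$ is an elliptic submersion, and that there is a continuous section of $h$ over all of $X$ which is holomorphic in a neighborhood of $X'$. The first is immediate, since every open Riemann surface is a reduced Stein space and a discrete subset is a $0$-dimensional analytic subvariety.

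For the ellipticity I would exhibit a dominating fiber-spray. On the model fiber $\C^\ast$ the map $\sigma:\C^\ast \times \C \to \C^\ast$, $\sigma(z,t)=ze^t$, is a spray with $\sigma(z,0)=z$, and since $\partial_t\sigma(z,t)|_{t=0}=z\not=0$ spans $T_z\C^\ast$ it is dominating. The decisive point is that $\sigma$ is equivariant with respect to $\Aut_{\cO}(\C^\ast)=\{z\mapsto az:a\in\C^\ast\}\cup\{z\mapsto a/z:a\in\C^\ast\}$: for $g(z)=az$ one has $g(\sigma(z,t))=\sigma(g(z),t)$, and for $g(z)=a/z$ one has $g(\sigma(z,t))=\sigma(g(z),-t)$. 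Hence, over the transition functions of the $\C^\ast$-bundle $h:Z\setminus Z'\to X\setminus X'$, the local sprays glue to a global dominating spray on a holomorphic line bundle $L\to Z\setminus Z'$ (with structure group reduced to $\pm 1$), so that $h$ is an elliptic submersion over $X\setminus X'$.

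The continuous section holomorphic near $X'$ I would produce by gluing. By hypothesis there is a holomorphic section on a neighborhood $V$ of $X'$; shrinking, I take $V$ to be a disjoint union of contractible coordinate discs, one around each point of $X'$. It remains to extend this section continuously over $X$, and here the topology of the base is what makes the proposition work in dimension one: an open Riemann surface is homotopy equivalent to a $1$-dimensional CW complex, so $H^n(X;\mathcal{L})=0$ for all $n\geq 2$ and every local system $\mathcal{L}$, and consequently $H^2(X,V;\mathcal{L})=0$ by the long exact sequence of the pair, the components of $V$ being contractible. The fiber $\C^\ast\simeq S^1$ is connected with $\pi_k=0$ for $k\geq 2$, so the successive obstructions to extending a section of $h$ from $V$ over the skeleta of $X$ lie in $H^1(X,V;\pi_0(\C^\ast))=0$ and in $H^2(X,V;\pi_1(\C^\ast))=0$; thus the desired continuous section $f:X\to Z$ exists.

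With these three points in place, Theorem~\ref{Oka principle 2} applies to $f$ and deforms it, through continuous sections that stay holomorphic on a fixed neighborhood of $X'$, to a global holomorphic section of $h$, which is precisely the assertion. I expect the main obstacle to be the ellipticity step—verifying that the exponential spray on $\C^\ast$ really descends to a global dominating spray on the bundle, including the behavior across the inversion component of $\Aut_{\cO}(\C^\ast)$—together with the bookkeeping needed to guarantee that the glued section $f$ is genuinely holomorphic on a full neighborhood of $X'$, so that the hypotheses of Theorem~\ref{Oka principle 2} are satisfied verbatim.
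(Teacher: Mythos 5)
Your proposal is correct and follows the paper's overall strategy exactly: reduce to Theorem~\ref{Oka principle 2} by producing a continuous section that is holomorphic near $X'$, then let Forstneri\v c's theorem deform it to a holomorphic one. The differences are in the two sub-steps, and both are worth noting. First, you explicitly verify the elliptic submersion hypothesis by exhibiting the exponential spray $\sigma(z,t)=ze^t$ on $\C^\ast$ and checking its equivariance under the full automorphism group $\Aut_{\cO}(\C^\ast)$, including the inversion component, so that the local sprays glue on a flat $\pm1$-twisted line bundle over $Z\setminus Z'$; the paper leaves this point entirely implicit, treating ``$\C^\ast$-bundle $\Rightarrow$ elliptic submersion'' as known, so your verification is a genuine (and correct) addition. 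Second, for the continuous section you use classical relative obstruction theory: since $X$ is homotopy equivalent to a $1$-dimensional CW complex, $H^2(X,V;\mathcal{L})=0$ for every local system $\mathcal{L}$ (the long exact sequence of the pair, with the components of $V$ contractible), and since $\C^\ast\simeq S^1$ has $\pi_k=0$ for $k\geq 2$, all obstructions vanish. The paper instead deformation retracts $X\setminus X'$ onto a $1$-dimensional CW complex $K$ arranged to contain $\partial U$, extends the section over $K$ using only connectedness of the fiber, and then extends over all of $X\setminus X'$ by the section-extension theorem for bundles over spaces retracting to a subcomplex (Theorem 7.1 in~\cite{Husemoller}), gluing with the holomorphic section along $\partial U$. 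Your local-coefficient formulation is the right way to phrase it, since the inversion in the structure group acts by $-1$ on $\pi_1(\C^\ast)$ and the fibration need not be simple; the paper's route sidesteps $\pi_1$ altogether because extension over a $1$-complex needs only path-connectedness. One piece of bookkeeping you should make explicit, matching the care the paper takes in arranging $\partial U\subset K$: choose the CW structure on $X$ so that shrunk closed discs around the points of $X'$ form a subcomplex on which the section is already defined, and observe that all remaining cells lie in $X\setminus X'$, where $h$ is actually a fibration --- otherwise obstruction theory does not apply verbatim, since $h$ need not be locally trivial over $X'$. With that caveat addressed, your argument is complete.
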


\begin{proof}
First we show the existence of a continuous section which is holomorphic in a neighborhood $U$ of $X'$. 
By assumption there is a local holomorphic section $f:U \to Z$ of $h$ defined on a neighborhood $U$ of $X'$. By possibly shrinking 
$U$ we may assume that every connected component of $U$ contains exactly one point of $X'$ and is homeomorphic to a disc, and that 
$f$ extends continuously to $\overline{U}$. $X\setminus X'$ is an open Riemann surface and thus deformation retracts onto a $1$-dimensional CW-complex $K$, 
see e.g.\,\cite{Hamm}. After possibly modifying a fixed deformation retract $r$ of $X \setminus X'$ onto $K$ by a conjugation with a suitable homeomorphism of $X \setminus X'$ we can assume that $\partial U \subset K$. 
Since the fiber $\C^\ast$ of $Z$ is connected we can extend $f|\partial U$ to a section $\tilde f:K \to Z|K$. Since 
$K$ is a deformation retract of $X \setminus X'$ and $h:Z\setminus Z' \to X \setminus X'$ is a fiber bundle, the section $\tilde f$ extends to a continuous section $F:X \setminus X' \to Z \setminus Z'$, see e.g.\,Theorem 7.1, p.\,21 in~\cite{Husemoller}. 
Since $f$ and $F|X \setminus U$ agree on $\partial U$, these two sections define a continuous section $X \to Z$ which agrees with the holomorphic section $f$ on the neighborhood $U$ of $X'$.
The existence of a global holomorphic section follows now from the above Oka principle due to Forstneri\v c, see Theorem~\ref{Oka principle 2}. This finishes the proof.
\end{proof}

\begin{proof}[Proof of Proposition~\ref{section}]
Let $h:Z\to X$ be the bundle over $X$ from Proposition~\ref{section}. 
With Lemma~\ref{local} we proved that there are local sections of $h$ at every point $x \in X$, in particular also at points of the discrete set $X'=\{D=0\}$. Moreover, 
with Lemma~\ref{D} we showed that $h$ is a locally trivial $\C^\ast$-bundle over $X\setminus \{D=0\}$. It follows now from Proposition~\ref{Oka principle} 
that there is a holomorphic section of $h$. This finishes the proof.
\end{proof}

\begin{lemma}
\label{log vanishing}
Let $X$ be an open Riemann surface and let $A:X \to \SL_2(\C)$ be holomorphic with vanishing trace. 
Then $A=e^B$ for some holomorphic $B: X \to \M_2(\C)$ with vanishing trace.
\end{lemma}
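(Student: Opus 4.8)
The plan is to exhibit $B$ explicitly rather than to build a logarithm by any global analytic argument; in fact I expect $B$ to be nothing more than a constant scalar multiple of $A$ itself. The starting point is that a trace-zero element of $\SL_2(\C)$ has extremely rigid spectral data: since $\tr(A)\equiv 0$ and $\det(A)\equiv 1$, the characteristic polynomial equals $t^2+1$ at every point of $X$, so the eigenvalues are the constants $\pm i$, with no dependence on $x$. Applying the Cayley--Hamilton theorem pointwise to the holomorphic matrix $A$ therefore yields the identity
\[
A^2 = \tr(A)\,A - \det(A)\,I = -I
\]
as an equality of holomorphic $\M_2(\C)$-valued functions on $X$.

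The second step is to compute the exponential using this relation. First I would set $B\coloneqq \tfrac{\pi}{2}A$, which is holomorphic and has vanishing trace since $\tr(B)=\tfrac{\pi}{2}\tr(A)\equiv 0$. Because $A^2=-I$, the even and odd powers of $A$ collapse to $\pm I$ and $\pm A$ respectively, so the exponential series splits into the scalar cosine and sine series and we obtain
\[
e^{B}=\sum_{n\geq 0}\frac{(\pi/2)^n}{n!}A^n=\cos\!\big(\tfrac{\pi}{2}\big)\,I+\sin\!\big(\tfrac{\pi}{2}\big)\,A=A.
\]
This already gives $A=e^B$ with $B$ holomorphic and of trace zero, as required.

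The only points needing attention are routine: one must check that the exponential power series of the holomorphic matrix $A$ converges locally uniformly, so that rearranging it into its even and odd parts is legitimate, which is immediate since $\exp$ is entire and $A$ is locally bounded. I expect no genuine obstacle here. It is worth emphasizing why this lemma sidesteps the analytic difficulties of the earlier steps: the usual enemy when taking matrix logarithms is the global choice of a branch of the logarithm of the eigenvalues (or, as in Lemma~\ref{local trivialization}, of a square root of the discriminant), together with the resulting monodromy. The trace-zero hypothesis forces the eigenvalues to be the \emph{constant} values $\pm i$, so no branch ever has to be chosen, and the logarithm of $A$ is simply the constant rescaling $\tfrac{\pi}{2}A$.
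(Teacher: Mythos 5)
Your proof is correct, and it takes a genuinely different and more elementary route than the paper. The paper diagonalizes $A$ holomorphically: it forms the eigenvector line bundles $E(i), E(-i) \subset X \times \C^2$, invokes the nontrivial fact that every holomorphic line bundle over an open Riemann surface is trivial to obtain nowhere-vanishing holomorphic eigenvectors, assembles these into $P: X \to \GL_2(\C)$ with $A = PDP^{-1}$, $D = \mathrm{diag}(i,-i)$, and then sets $B = P\tilde{D}P^{-1}$ with $\tilde{D} = \mathrm{diag}(i\pi/2, -i\pi/2)$. You instead observe that Cayley--Hamilton gives $A^2 = -I$ pointwise, whence $e^{tA} = \cos(t)\,I + \sin(t)\,A$ and $B = \tfrac{\pi}{2}A$ is an explicit trace-zero logarithm; your computation checks out (e.g.\ for $A = \mathrm{diag}(i,-i)$ one gets $e^{\pi A/2} = \mathrm{diag}(e^{i\pi/2}, e^{-i\pi/2}) = A$), and your convergence remark is indeed routine since the exponential series converges absolutely, so the even/odd rearrangement is legitimate. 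Your identity is precisely the trace-zero analogue of the unipotent identity $\exp(A - \mathrm{id}) = A$ that the paper itself exploits in the proof of Theorem~\ref{Riemann} when $\chi_A = (T-1)^2$, so it is a little surprising the paper did not use it here. What your argument buys: it eliminates the only place in this lemma where the hypothesis that $X$ is an open Riemann surface is used (the vanishing of $H^1(X, \cO^\ast)$ behind the triviality of $E(\pm i)$), so your version of the lemma holds verbatim for holomorphic maps from an arbitrary complex space, and even for trace-zero elements of $\SL_2(R)$ in any complex Banach algebra $R$, the disc algebra included. What the paper's approach buys is a template that survives when the spectrum is not constant: for varying eigenvalues no polynomial-in-$A$ formula for the logarithm is available, and one must diagonalize and choose logarithms of the eigenvalues, which is exactly the shape of the argument in the proof of Theorem~\ref{disc}.
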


\begin{proof}
The characteristic polynomial of $A$ equals $T^2+1$. In particular $\pm i$ are the eigenvalues (at every point $x \in X$). 
There are line bundles $E(i)$ and $E(-i)$ over $X$, whose non-vanishing sections correspond to holomorphic eigenvectors 
of $i$ and $-i$ respectively. Explicitly, we have 
\begin{align*}
E(i) & \coloneqq \{(x,z) \in X \times \C^2: A(x)z=iz\}, \\  
E(-i) & \coloneqq \{(x,z) \in X \times \C^2: A(x)z=-iz\}.
\end{align*}
Since every line bundle over an open Riemann surface is trivial, we have $E(i) \cong X \times \C \cong E(-i)$ as complex analytic line bundles. 
This implies that there are two holomorphic eigenvectors $v: X \to E(i)$, $w:X \to E(-i)$ with $v(x)\not = 0 \not =w(x)$ for all $x \in X$. In 
particular $$P:X \to \M_2(\C), \ P(x)\coloneqq (v(x) \  w(x))$$ takes values in $\GL_2(\C)$ since $v(x)$ and $w(x)$ are eigenvectors of $A(x)$ to 
the distinct eigenvalues $\pm i$. This implies that $A$ is holomorphically diagonalisable with 
\begin{align*}
A=PDP^{-1}, \ \ D\coloneqq 
\begin{pmatrix}
i & 0 \\
0 & -i
\end{pmatrix}.
\end{align*}
For the diagonal matrix $\tilde D$ with entries $ \pm \tfrac{i\pi}{2}$ we have $e^{\tilde D}=D$. We get for $B \coloneqq P \tilde D P^{-1}$ the equality
\begin{align*}
A=PDP^{-1}=Pe^{\tilde D}P^{-1}=e^{P\tilde D P^{-1}}=e^B, 
\end{align*}
as desired. Note that $B$ has vanishing trace since $\tilde D$ has vanishing trace. This finishes the proof.
\end{proof}

\begin{proof}[Proof of Theorem~\ref{Riemann}]
Let $X$ be an open Riemann surface and let $A:X \to \SL_2(\C)$ be a holomorphic map. If the characteristic polynomial of 
$A$ equals $(T-1)^2$, then, since $(A-id)^2=\chi_A(A)=0$ by Cayley-Hamilton, we have 
$$\exp(A-id)=id +(A-id) =A.$$ Moreover, the trace of $A$ is equal to minus the second coefficient of the characteristic polynomial, which implies in our case that $\tr(A-id)=0$, as desired. This shows that $A$ can be written as a single exponential factor. 
If the characteristic polynomial is $(T+1)^2$, then 
the characteristic polynomial of $-A$ is $(T-1)^2$ and since $-id$ is equal to the exponential of the diagonal matrix with diagonal entries 
$\pi i$ and $-\pi i$, $A$ is a product of at most two exponentials with vanishing trace. Otherwise there is $x \in X$ such that $A(x)$ has distinct eigenvalues. 
In that case it follows from Proposition~\ref{vanishing trace} that $A=BC$ for holomorphic $B,C:X \to \SL_2(\C)$ with vanishing trace. In particular, 
the characteristic polynomials of $B$ and $C$ are both $(T-i)(T+i)$. Since $B$ and $C$ have a logarithm by Lemma~\ref{log vanishing}, we are done.
\end{proof}

\section{Proof of Theorem~\ref{disc}}
The proof depends essentially on three ingredients. The first ingredient is that the Bass stable rank of the disc algebra $\cA$ equals $1$. This is needed to reduce the problem to matrices with an invertible first 
entry. The second and third ingredient are the simple facts that the elements of $\cA$ are bounded, and 
that $\exp: \cA \to \cA$ is onto to units of $\cA$. 
In the following $\overline \Delta\subset \C$ denotes the closed unit disc centered at the origin. We use the following notation. If $f:\overline \Delta \to \C$ is a function, then $|f|:\overline \Delta \to \R$ denotes the absolute value $z \mapsto |f(z)|$. In particular, 
the symbol $|f|$ should not be confused with the sup-norm on $\cA$, which is not used explicitly in the proof. Moreover, for $f,g: \overline \Delta \to \R$ we write $f>g$ if $f(z)>g(z)$ for all $z \in \overline \Delta$. The proof depends on the following elementary lemma.

\begin{lemma}
\label{square root}
Let $f \in \cA$ be such that $|f|>2$. Then the polynomial $T^2-fT+1$ has roots $\lambda, \lambda^{-1} \in \cA$ such that 
$|\lambda|>1$.
\end{lemma}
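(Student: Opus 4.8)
The plan is to write down the two roots explicitly as $\tfrac{f\pm\sqrt{f^2-4}}{2}$ and to realize the relevant square root as an honest element of $\cA$. Since the product of the roots of $T^2-fT+1$ equals its constant term $1$, once one root $\lambda$ is produced in $\cA$ with $|\lambda|>1$, the other root is automatically $\lambda^{-1}\in\cA$ with $|\lambda^{-1}|<1$; so the whole lemma reduces to a careful choice of a square root of $f^2-4$ together with a sign.

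First I would observe that $|f|>2$ makes $f$ a unit of $\cA$: it is continuous on $\overline\Delta$, holomorphic on $\Delta$ and nowhere zero, so $1/f\in\cA$ with $|1/f|<\tfrac12$. The naive attempt $\sqrt{f^2-4}=\sqrt{f^2}\cdot\sqrt{1-4/f^2}$ stumbles on the first factor, since there is no reason for $f$ to admit a square root in $\cA$. The key observation is that this factor is not needed: writing
$$f^2-4=f^2\Big(1-\tfrac{4}{f^2}\Big),$$
the function $g\coloneqq 1-4/f^2$ satisfies $|g-1|=4/|f|^2<1$, so $g$ takes values in the disc $\{|w-1|<1\}$, which lies in the open right half-plane. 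Hence the principal branch of the square root, holomorphic on $\C\setminus(-\infty,0]$, composes with $g$ to give a well-defined $h\coloneqq\sqrt g\in\cA$ (continuous on $\overline\Delta$, holomorphic on $\Delta$) with $h^2=g$ and $\Re h>0$. Setting $s\coloneqq fh\in\cA$ gives $s^2=f^2g=f^2-4$, so $s$ is the desired square root, obtained without ever extracting a root of $f$ itself.

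Then I would define $\lambda\coloneqq\tfrac{f+s}{2}=\tfrac{f(1+h)}{2}$ and $\mu\coloneqq\tfrac{f-s}{2}$, both in $\cA$, and check the Vieta relations $\lambda+\mu=f$ and $\lambda\mu=\tfrac{f^2-s^2}{4}=1$; this identifies $\mu=\lambda^{-1}$ and shows $T^2-fT+1=(T-\lambda)(T-\lambda^{-1})$. Finally, the sign has been chosen so that the modulus estimate is immediate: because $\Re h>0$ we have $|1+h|\geq\Re(1+h)=1+\Re h>1$, and together with $|f|>2$ this yields
$$|\lambda|=\tfrac{|f|\,|1+h|}{2}>\tfrac{2\cdot 1}{2}=1.$$
The only genuinely delicate point is the middle step, namely arranging that $f^2-4$ has a square root in $\cA$ at all; the trick that removes the obstacle is to factor out $f^2$ so that only the principal branch on the right half-plane is invoked, after which the remaining verifications are direct computations.
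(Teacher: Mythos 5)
Your proof is correct, but it takes a genuinely different route from the paper's. The paper argues softly: since $|f|>2$ the discriminant $f^2-4$ is zero-free on $\overline\Delta$, hence admits \emph{some} square root in $\cA$ (this uses that zero-free elements of the disc algebra have logarithms, because the closed disc is contractible), which produces roots $\lambda,\lambda^{-1}\in\cA$; the modulus statement is then obtained by a topological dichotomy: if $T^2-zT+1$ has a root $r$ with $|r|=1$, then $|z|=|r^2+1|\leq 2$, so for $|f|>2$ neither root ever meets the unit circle, and by continuity of $\lambda,\lambda^{-1}$ on the connected set $\overline\Delta$ exactly one of the two has modulus strictly greater than $1$ everywhere. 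You replace both of these soft steps by explicit constructions: factoring $f^2-4=f^2\bigl(1-\tfrac{4}{f^2}\bigr)$ and composing $g=1-4/f^2$, whose values lie in $\{|w-1|<1\}\subset\{\Re w>0\}$, with the principal branch gives a concrete square root $s=fh$ without invoking the existence of logarithms in $\cA$, and your sign choice makes the estimate $|\lambda|=\tfrac{|f|\,|1+h|}{2}>1$ immediate from $|1+h|\geq 1+\Re h>1$, so the avoid-the-circle and connectedness arguments are never needed. All the verifications check out: $|g-1|=4/|f|^2<1$, the disc of radius $1$ about $1$ does lie in the open right half-plane, $\lambda\mu=\tfrac{f^2-s^2}{4}=1$ correctly identifies $\mu=\lambda^{-1}\in\cA$, and $|\lambda|>1$ forces $|\lambda^{-1}|<1$. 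What each approach buys: yours is constructive, pinpoints \emph{which} root is the large one, and would transfer verbatim to other uniform algebras (for instance $H^\infty$, or algebras over non-simply-connected domains) since it only uses the principal branch on the right half-plane rather than global square roots; the paper's version is shorter given the standard facts about $\cA$, but it genuinely depends on the connectedness of $\overline\Delta$ to select the dominant root and on simple connectivity to extract the square root in the first place.
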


\begin{proof}
First note that our assumption implies that the discriminant $f^2-4$ does not vanish. Therefore 
$f^2-4$ has a square root in $\cA$, which implies that there are roots $\lambda, \lambda^{-1} \in \cA$ of 
$T^2-fT+1$. We have to show that one of $|\lambda|$ and $|\lambda^{-1}|$ is strictly larger than $1$. 
Note that if $T^2-zT+1$, $z \in \C$ has a root $r \in \C$ with $|r|=1$, then we get $|z|=|r^2+1|/|r|=|r^2+1|\leq 2$. Expressed differently, if $|z|>2$, then 
$T^2-zT+1$ has no root on the unit circle. This implies that $\lambda$ and $\lambda^{-1}$ avoid the unit circle, and moreover -- by continuity of $\lambda$ and $\lambda^{-1}$ --  that exactly one of the two is strictly bigger than $1$ in absolute value.
\end{proof}

\begin{proof}[Proof of Theorem~\ref{2}]
Let $$A=\begin{pmatrix}
a & b \\
c & d
\end{pmatrix}\in \SL_2(\cA).$$
It is well-known that the 
Bass stable rank of $\cA$ equals $1$, see~\cite{JMW}. By definition of the Bass stable rank this means that for 
any pair $f,g \in \cA$ with $f \cA + g \cA =\cA$, there is $h \in \cA$ such that 
$f +h g$ is a unit in $\cA$. In particular, since $ad-bc=1$, there is $h \in \cA$ such that 
$a+h c=1$. Consequently the first entry of 
\begin{align*}
\begin{pmatrix}
1 & h \\
0 & 1
\end{pmatrix}
\begin{pmatrix}
a & b \\
c & d
\end{pmatrix}
\begin{pmatrix}
1 & -h \\
0 & 1
\end{pmatrix}=
\begin{pmatrix}
a+h c & \ast \\
\ast & \ast
\end{pmatrix}
\end{align*}
is a unit. Since conjugation with matrices in $\GL_2(\cA)$ does not change the number of 
needed exponential factors to represent a given matrix, this shows that it suffices to consider the case where the first entry $a$ of $A$ is a unit. 
For such $A$, the strategy is as follows: for $\delta>0$ set 
$$B \coloneqq 
 \begin{pmatrix}
\delta  & 0 \\
0 & 1/\delta
\end{pmatrix} 
\begin{pmatrix}
a & b \\
c & d
\end{pmatrix}
= \begin{pmatrix}
\delta a & \delta b \\
c/\delta & d/\delta
\end{pmatrix} \in \SL_2(\cA).$$ If we find $\delta$ such that $B=B(\delta)$ has a logarithm, then -- since $A$ is the product of the diagonal matrix with entries 
$1/\delta, 0, 0, \delta$ and $B$ -- we know that $A$ is a product of two exponentials. Our claim is that $B$ has a logarithm for any sufficiently large $\delta>0$. To see this, let $\delta \geq 1$ be an upper bound of the (bounded) function
$$\beta=\frac{3+|d|}{|a|}.$$ 
From the fact that $\delta \geq 1$ is an upper bound of $\beta$ it follows that 
$$|\tr(B)|= |\delta a +d/\delta| \geq \delta|a|-\frac{|d|}{\delta} \geq  (3+|d|)-|d|>2.$$
By Lemma~\ref{square root} we know that the characteristic polynomial $\chi_B=T^2-\tr(B) T+1$ has roots $\lambda, \lambda^{-1} \in \cA$ with $|\lambda|>1$. 
Since $\lambda$ is a unit in $\cA$, the matrix $D$ with diagonal entries $\lambda$ and $\lambda^{-1}$ has a logarithm given by the diagonal matrix with diagonal entries $\log(\lambda) \in \cA$ and $-\log(\lambda) \in \cA$ for some fixed 
logarithm of $\lambda$. 
Moreover, since conjugation with an element in $\GL_2(\cA)$ does not change the number of needed exponential factors, it suffices to find $P \in \GL_2(\cA)$ with 
$$B=PDP^{-1}.$$
Our claim is that 
$$P=
\begin{pmatrix}
d/\delta-\lambda &-\delta b \\
 -c/\delta & \delta a-\lambda^{-1}
\end{pmatrix} \in \M_2(\cA)$$ does the job. To show this it suffices to show that the columns $v$ resp.\,$w$ of $P=(v \ w)$ satisfy $(B-\lambda id)v=(B-\lambda^{-1} id)w=0$ and that 
$|\det B|\geq 1$. For the first part we get 
$$
(B-\lambda id)v=
\begin{pmatrix}
\delta a -\lambda & \delta b \\
c/\delta & d/\delta -\lambda 
\end{pmatrix}
\begin{pmatrix}
d/\delta-\lambda \\
 -c/\delta 
\end{pmatrix}
= 
\begin{pmatrix}
\chi_B(\lambda) \\
0
\end{pmatrix} =0,
$$
and similarly 
$$(B- \lambda^{-1} id)w=
\begin{pmatrix}
\delta a -\lambda^{-1} & \delta b \\
c/\delta & d/\delta -\lambda^{-1} 
\end{pmatrix}
\begin{pmatrix}
-\delta b \\
\delta a-\lambda^{-1}
\end{pmatrix}
=
\begin{pmatrix}
0 \\
\chi_B(\lambda^{-1})
\end{pmatrix} =0
.$$
For the second part, we get with $ad-bc=1$ $$\det P=-\delta \lambda  a - \delta^{-1} \lambda^{-1} d  +2.$$ 
It follows from $|\lambda|>1$ that
$$|\det P| \geq  \delta |\lambda| |a|-\delta^{-1} |\lambda^{-1}| |d|-2 \geq  \delta |a|-\delta^{-1}|d|-2.$$ 
Furthermore, the fact that $\delta \geq 1$ bounds $\beta=(3+|d|)/|a|$ from above yields
$$\delta |a|-\delta^{-1}|d|-2 \geq (3+|d|)-|d|-2=1,$$ which shows that $|\det P|\geq 1$. This finishes the proof.
\end{proof}

\end{document}